\newtheorem{theorem}{Theorem}[section] 
\newtheorem{proposition}[theorem]{Proposition}
\newtheorem{lemma}[theorem]{Lemma}
\newtheorem{conjecture}[theorem]{Conjecture}
\newtheorem{definition}[theorem]{Definition}
\newtheorem{question}[theorem]{Question}
\theoremstyle{definition}
\theoremstyle{remark}
\newtheorem{remark}[theorem]{Remark}
\numberwithin{equation}{section}
\newcommand{\cC}{\mathcal{C}}
\newcommand{\cF}{\mathcal{LR}}
\newcommand{\sa}{\mathsf{A}}
\newcommand{\ssf}{\mathsf{f}}
\newcommand{\sg}{\mathsf{g}}
\newcommand{\N}{\mathbb{N}}
\title{Some stable plethysms}
\author{Stacey Law}
\address[S. Law]{Department of Pure Mathematics and Mathematical Statistics, University of Cambridge, Cambridge CB3 0WB, UK}
\email{swcl2@cam.ac.uk}
\author{Yuji Okitani}
\address[Y. Okitani]{Department of Mathematics, University of California, Berkeley, CA 94720, USA}
\email{yuji\_okitani@berkeley.edu}
\begin{document}
	
\begin{abstract}
	In this note, we prove some new stability results for plethysm coefficients. As special cases, we verify a conjecture of Wildon, and show the stability of sequences recently predicted by Bessenrodt, Bowman and Paget to be weakly increasing.
\end{abstract}

\keywords{Plethysms \and stability}
\subjclass[2020]{05E05,05E10}

\maketitle
\baselineskip=15pt

\section{Introduction}

First introduced by Littlewood in 1936, plethysm is a type of product of two symmetric functions which remains widely studied today. It is one of the most important operations on symmetric functions that is still not fully understood combinatorially: indeed, the question of finding a combinatorial description of the plethysm coefficients $a^\nu_{\lambda,\mu}$, the multiplicity with which the Schur function $s_\nu$ appears in the decomposition of the plethysm product $s_\lambda\circ s_\mu$ as a linear combination of Schur functions, is Problem 9 from Stanley's survey of positivity problems in algebraic combinatorics \cite{Sta-OP}.
Although there has been much work done to understand plethysms in special cases, such as computing $a^\nu_{\lambda,(m)}$ for small values of $|\lambda|$ \cite{Thrall, Foulkes, DS}, describing $a^\nu_{\lambda,\mu}$ when the indexing partitions are of `nice' shapes such as hooks or of small Durfee size \cite{LR,BBP}, or determining the maximal and minimal partitions $\nu$ with respect to the dominance ordering such that $s_\nu$ is a constituent of $s_\lambda\circ s_\mu$ \cite{PW}, the general problem is still very much open.

One family of results on plethysms centres on understanding the behaviour of sequences of plethysm coefficients. While it can be very difficult to calculate the explicit values of specific coefficients, sometimes it is more tractable to compare them with one another and show that certain sequences exhibit properties such as stability or monotonicity, as results in \cite{CT, Brion, Col, dBPW} demonstrate.

Our main result is the stability of sequences in a new family of plethysm coefficients, generalising those considered in \cite{BBP,LO} and by Wildon (in private communication). To describe this motivation and introduce the collection of plethysm coefficients that we consider, we fix some notation.
For partitions $\alpha$ and $\beta$, we define $\alpha+\beta:=(\alpha_1+\beta_1,\alpha_2+\beta_2,\dotsc)$, while $\alpha\sqcup\beta$ denotes the partition obtained by concatenating the parts of $\alpha$ and $\beta$ and then reordering if necessary.
Let $m\in\N$ and $l\in\{0,1,\dotsc,m\}$. For each $j\in\N_0$, we define partitions $\alpha^{l,m}\{j\}$ and $\alpha^{l,m}[j]$ by adding an `arm' and/or `legs' of certain lengths to the Young diagram of $\alpha$: this is illustrated in Figure~\ref{fig:add} and defined explicitly in Definition~\ref{def:li}, where the concepts are extended to skew shapes.

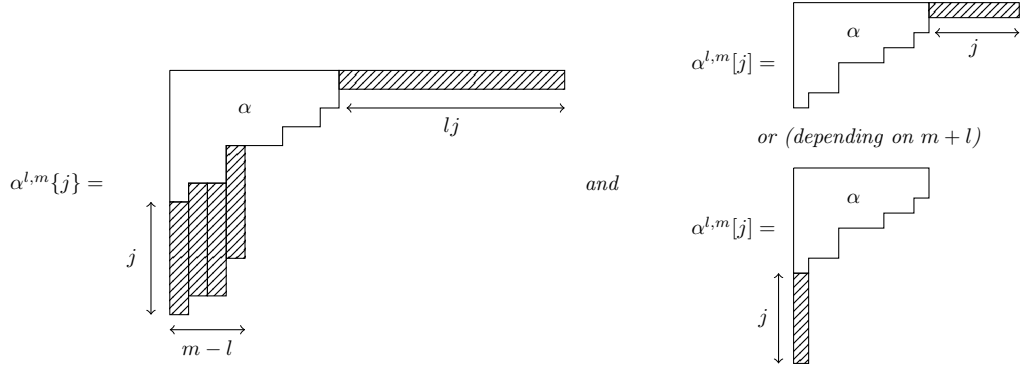
\begin{figure}[h!]
	\centering
	\begin{tikzpicture}[scale=0.5, every node/.style={scale=0.8}]
		\draw (-3,-3) node[] {$\alpha^{l,m}\{j\} =$};
		\draw (0,0) -- (4.5,0) -- (4.5,-1) -- (4,-1) -- (4,-1.5) -- (3,-1.5) -- (3,-2) -- (1.5,-2) -- (1.5,-3) -- (0.5,-3) -- (0.5,-3.5) -- (0,-3.5) -- (0,0);
		\draw (2,-1) node[] {$\alpha$};
		\draw [pattern=north east lines] (0,-3.5) rectangle (0.5,-6.5);
		\draw [pattern=north east lines] (0.5,-3) rectangle (1,-6);
		\draw [pattern=north east lines] (1,-3) rectangle (1.5,-6);
		\draw [pattern=north east lines] (1.5,-2) rectangle (2,-5);
		\draw [pattern=north east lines] (4.5,0) rectangle (10.5,-0.5);
		\draw [<->] (-0.5,-3.5) -- (-0.5,-6.5);
		\draw (-1,-5) node[] {$j$};
		\draw [<->] (4.7,-1) -- (10.5,-1);
		\draw (7.5,-1.5) node[] {$lj$};
		\draw [<->] (0,-6.9) -- (2,-6.9);
		\draw (1,-7.4) node[] {$m-l$};
		\draw (11.5,-3) node[] {\textit{and}};
	\end{tikzpicture}
	\hspace{15pt}
	\begin{tikzpicture}[scale=0.4, every node/.style={scale=0.8}]
		\draw (-2,-2) node[] {$\alpha^{l,m}[j] =$};
		\draw (0,0) -- (4.5,0) -- (4.5,-1) -- (4,-1) -- (4,-1.5) -- (3,-1.5) -- (3,-2) -- (1.5,-2) -- (1.5,-3) -- (0.5,-3) -- (0.5,-3.5) -- (0,-3.5) -- (0,0);
		\draw (2,-1) node[] {$\alpha$};
		\draw [pattern=north east lines] (4.5,0) rectangle (7.5,-0.5);
		\draw [<->] (4.7,-1) -- (7.5,-1);
		\draw (6,-1.5) node[] {$j$};
		\draw (2.5,-4.5) node {\textit{or (depending on $m+l$)}};
		
		\begin{scope}[yshift=-5.5cm]
			\draw (-2,-2) node[] {$\alpha^{l,m}[j] =$};
			\draw (0,0) -- (4.5,0) -- (4.5,-1) -- (4,-1) -- (4,-1.5) -- (3,-1.5) -- (3,-2) -- (1.5,-2) -- (1.5,-3) -- (0.5,-3) -- (0.5,-3.5) -- (0,-3.5) -- (0,0);
			\draw (2,-1) node[] {$\alpha$};
			\draw [pattern=north east lines] (0,-3.5) rectangle (0.5,-6.5);
			\draw [<->] (-0.5,-3.5) -- (-0.5,-6.5);
			\draw (-1,-5) node[] {$j$};
		\end{scope}
	\end{tikzpicture}
	\caption{$\alpha^{l,m}\{j\}$ and $\alpha^{l,m}[j]$ visualised using Young diagrams.} \label{fig:add}
\end{figure}

\begin{definition}\label{def:li}
	Let $m\in\N$ and $l\in\{0,1,\dotsc,m\}$. For any partition $\alpha$ and $j\in\N_0$, we define
	\[ \alpha^{l,m}\{j\}:= \big(\alpha+(lj)\big) \sqcup \big((m-l)^j\big) \quad\text{and}\quad \alpha^{l,m}[j]:= \begin{cases}
		\alpha + (j) & \text{if }m+l\text{ is even},\\
		\alpha\sqcup (1^j) & \text{if }m+l\text{ is odd}.
	\end{cases} \]
	Let $\sigma$ be a skew shape, say $\sigma=\mu/\nu$ for some partitions $\mu$ and $\nu$. Then we define
	\[ \sigma^{l,m}\{j\}:= \big(\mu^{l,m}\{j\}\big)/\nu \quad\text{and}\quad \sigma^{l,m}[j]:=\big(\mu^{l,m}[j]\big)/\nu. \]
	We let $\sa_{l,m}(\sigma,\tau)$ denote the sequence
	\[  \sa_{l,m}(\sigma,\tau):=\big(a^{\sigma^{l,m}\{j\}}_{\tau^{l,m}[j],(m)}\big)_{j\in\N}, \]
	where plethysm coefficients indexed by skew shapes are defined in \eqref{eqn:skew-a} below.
\end{definition}

In their course of their work on classifying multiplicity-free plethysms of Schur functions, Bessenrodt, Bowman and Paget recently conjectured that the sequence of plethysm coefficients $\sa_{1,2}(\nu,\lambda)$ should be weakly increasing, for any partitions $\nu$ and $\lambda$ \cite[Conjecture 1.2]{BBP}.
While we do not prove this monotonicity conjecture, we were able to show that such sequences are always eventually constant, for any $\nu$ and $\lambda$, in \cite[Proposition 5.3]{LO}. Wildon then conjectured a generalisation of this stability result from $m=2$ to arbitrary natural $m$.
\begin{conjecture}[Wildon]\label{conj:mark}
	Let $\nu$ and $\lambda$ be arbitrary partitions, and $m\in\N$. Then $\sa_{m-1,m}(\nu,\lambda)$ stabilises.
\end{conjecture}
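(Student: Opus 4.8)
The plan is to compare consecutive terms of the sequence by a Pieri-type recursion and then to run an induction kept inside the framework of Definition~\ref{def:li}, the two genuine difficulties being the closure of that induction and an a priori boundedness estimate. Throughout write $a_j:=a^{\nu^{m-1,m}\{j\}}_{\lambda^{m-1,m}[j],(m)}$ for the $j$th term of $\sa_{m-1,m}(\nu,\lambda)$, so that $\nu^{m-1,m}\{j\}=(\nu+((m-1)j))\sqcup(1^j)$ and $\lambda^{m-1,m}[j]=\lambda\sqcup(1^j)$. Since $s_\tau\circ s_{(m)}$ is homogeneous of degree $m|\tau|$, the coefficient $a_j$ is forced to vanish unless $|\nu^{m-1,m}\{j\}|=m\,|\lambda^{m-1,m}[j]|$, i.e. unless $|\nu|=m|\lambda|$; if this fails the sequence is identically zero and there is nothing to prove, so I assume $|\nu|=m|\lambda|$ from now on.

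The engine is the observation that $f\mapsto f\circ s_{(m)}$ is a ring homomorphism. Applying it to the Pieri expansion $s_\mu\,s_{(1)}=\sum_{\mu^+}s_{\mu^+}$, where $\mu:=\lambda^{m-1,m}[j]$ and $\mu^+$ runs over the partitions covering $\mu$, and using $s_{(1)}\circ s_{(m)}=s_{(m)}$, I pair with $s_M$ for $M:=\nu^{m-1,m}\{j+1\}$ and apply the adjunction $\langle u\,s_{(m)},\,s_M\rangle=\langle u,\,s_{(m)}^{\perp}s_M\rangle$ to obtain
\[
\sum_{\mu^+} a^{M}_{\mu^+,(m)}=\sum_{\rho\,:\,M/\rho\text{ a horizontal }m\text{-strip}} a^{\rho}_{\mu,(m)}.
\]
The point of taking $l=m-1$ is that both sides carry a distinguished term: on the left $\mu^+=\lambda\sqcup(1^{j+1})=\lambda^{m-1,m}[j+1]$ gives $a_{j+1}$, while on the right $\rho=\nu^{m-1,m}\{j\}$ gives $a_j$, because $M/\nu^{m-1,m}\{j\}$ is precisely the $m-1$ new cells at the right end of the first row together with one new cell at the foot of the first column, and this is a horizontal $m$-strip for every $j\ge1$. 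Hence
\[
a_{j+1}-a_j=\sum_{\rho\ne\nu^{m-1,m}\{j\}} a^{\rho}_{\mu,(m)}\;-\sum_{\mu^+\ne\lambda\sqcup(1^{j+1})} a^{M}_{\mu^+,(m)}.
\]

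The next step is to recognise the two error sums as finite combinations of sequences of exactly the type covered by Definition~\ref{def:li}. Each surviving $\rho$ is obtained from the $M$-family by shortening the first row and/or the first column, and each surviving $\mu^+$ differs from the $\lambda$-family by one box in the $\lambda$-part; re-expressing these shapes through $\sigma^{l,m}\{j\}$ and $\tau^{l,m}[j]$ is where the skew shapes of Definition~\ref{def:li} and the shifted indexing (the target sitting at level $j+1$ against an inner shape at level $j$) become unavoidable, which is presumably why the paper sets up the whole family $\sa_{l,m}(\sigma,\tau)$ rather than the single case of the conjecture. I would then prove the theorem by a well-founded induction whose hypothesis is that all such sequences stabilise; the delicate point is to fix a complexity measure (some combination of $|\lambda|$, the number of error shapes, and the skewness) that the recursion strictly decreases, so that the induction actually closes. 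Granting this, both error sums are eventually constant and therefore $a_{j+1}-a_j$ is eventually equal to a fixed integer $c$.

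It remains to force $c=0$, and this is the step I expect to be hardest, since so far the recursion only shows that $(a_j)$ is eventually an arithmetic progression. What is needed is a bound, uniform in $j$, on $a_j=\dim \mathrm{Hom}_{GL(V)}\!\big(S^{\nu^{m-1,m}\{j\}}V,\,S^{\lambda^{m-1,m}[j]}(\mathrm{Sym}^m V)\big)$. I would look for this either representation-theoretically, by constructing an explicit injection of these multiplicity spaces as $j$ increases — this would simultaneously recover the monotonicity predicted for $m=2$ in \cite{BBP} — paired with a stable-limit or dimension argument capping them from above, or combinatorially, after using the plethysm conjugation symmetry $a^\xi_{\tau,(m)}=a^{\xi'}_{\tau,(1^m)}$ (for $m$ even, and its odd-$m$ analogue) to normalise parity and replace $s_{(m)}$ by $s_{(1^m)}$. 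Once boundedness is in hand, a bounded eventual arithmetic progression must have $c=0$, and the stabilisation of $\sa_{m-1,m}(\nu,\lambda)$ follows. The crux is thus twofold: verifying that the induction on the error terms closes within the skew-shape framework, and establishing the uniform upper bound that rules out genuine linear growth.
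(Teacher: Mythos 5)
Your Pieri-type recursion is correctly derived (the identity
$\sum_{\mu^+} a^{M}_{\mu^+,(m)}=\sum_{\rho} a^{\rho}_{\mu,(m)}$, summed over horizontal $m$-strips $M/\rho$, follows from $f\mapsto f\circ s_{(m)}$ being a ring homomorphism together with adjunction), and the two distinguished terms are correctly identified. But the two difficulties you flag are genuine gaps, and neither is resolved; as it stands the argument does not prove the conjecture. First, the induction does not close. If one rewrites your error terms in the form of Definition~\ref{def:li} (which can be done, e.g.\ via the reindexing $\nu^{m-1,m}\{j+1\}=\tilde\nu^{m-1,m}\{j\}$ with $\tilde\nu=(\nu+(m-1))\sqcup(1)$), the right-hand error sum consists, for large $j$, of terms of sequences $\sa_{m-1,m}(\kappa,\lambda)$ with $|\kappa|=|\nu|$, while the left-hand error sum consists of terms of sequences $\sa_{m-1,m}(\tilde\nu,\lambda^+)$ with $|\lambda^+|>|\lambda|$ and $|\tilde\nu|>|\nu|$. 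So every error sequence has the \emph{same} $m$ and $l$, the same or strictly larger $|\lambda|$, and the same or strictly larger $|\nu|$: none of the complexity measures you propose ($|\lambda|$, number of error shapes, skewness) decreases, the recursion propagates upward in size indefinitely, and no well-founded induction can be run on it. Second, even granting stabilisation of all error terms, your recursion only shows that $(a_j)$ is eventually an arithmetic progression with some common difference $c$; the uniform bound on $a_j$ needed to force $c=0$ is left entirely open, and such a bound is essentially as strong as the stabilisation being proved. The representation-theoretic and conjugation ideas you sketch for it are speculative and not carried out.

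For contrast, the paper avoids both problems by inducting on a parameter that genuinely decreases, namely $m$ itself (for fixed $l$), rather than on the shapes. The base case $\sa_{m,m}(\nu,\lambda)$ is imported from the known stability theorems of Carr\'e--Thibon \cite{CT} and Brion \cite{Brion}; the step from $m$ to $m+1$ (Lemma~\ref{lem:c}) uses the recursion of Theorem~\ref{thm:LO} from \cite{LO}, which expresses an $(m+1)$-plethysm coefficient as a finite signed sum of $m$-plethysm coefficients indexed by skew shapes, with the indexing of the sum independent of $j$; and Proposition~\ref{prop:b} upgrades stabilisation from partitions to skew shapes by a Littlewood--Richardson argument with stable bijections of fillings. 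Because $m$ strictly drops at each application of Theorem~\ref{thm:LO}, the induction closes, and no boundedness input is ever required. If you want to salvage your self-contained Pieri approach, these are exactly the two ingredients you would need substitutes for: a quantity that your recursion strictly decreases, and an argument replacing the missing uniform bound.
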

We were able to prove Wildon's conjecture, and in fact extend the parameters from $(m-1,m)$ to $(l,m)$ for arbitrary $0\le l\le m$, as well as from partitions $\nu$ and $\lambda$ to arbitrary skew shapes. This is given by the following theorem, which is the main result of this note.

\begin{theorem}\label{thm:main}
	Let $\sigma$ and $\tau$ be any skew shapes.
	For all natural numbers $m$ and for all $l\in\{0,1,\dotsc,m\}$, the sequence of plethysm coefficients $\sa_{l,m}(\sigma,\tau)$ is eventually constant.
\end{theorem}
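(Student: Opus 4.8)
The plan is to show that for $j$ large the coefficient $a^{\sigma^{l,m}\{j\}}_{\tau^{l,m}[j],(m)}$ equals a fixed quantity, by reducing it to a finite combination of classical coefficients each of which is manifestly eventually constant. I would deliberately avoid trying to prove monotonicity of $\sa_{l,m}(\sigma,\tau)$, since for $(l,m)=(1,2)$ that would settle the stronger conjecture of Bessenrodt--Bowman--Paget \cite{BBP}; eventual constancy should be attainable without pinning down the signs of successive differences.

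\emph{Reductions.} First I would compare degrees: since $|\sigma^{l,m}\{j\}| = |\sigma| + mj$ and $|\tau^{l,m}[j]| = |\tau| + j$, the sequence $\sa_{l,m}(\sigma,\tau)$ is identically zero unless $|\sigma| = m|\tau|$, so I assume this. Next, using the definition in~\eqref{eqn:skew-a}, I would expand the skew indices bilinearly through Littlewood--Richardson coefficients: writing $\sigma=\mu/\nu$ and $\tau=\rho/\pi$,
\[ a^{\sigma^{l,m}\{j\}}_{\tau^{l,m}[j],(m)} = \sum_{\kappa,\eta} c^{\mu^{l,m}\{j\}}_{\nu\kappa}\, c^{\rho^{l,m}[j]}_{\pi\eta}\, a^{\kappa}_{\eta,(m)}, \]
with $\nu,\pi$ fixed and the straight shapes $\kappa,\eta$ forced to track the growing outer shapes $\mu^{l,m}\{j\}$ and $\rho^{l,m}[j]$. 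Finally, the even/odd alternative in $[j]$ is exactly the parity datum needed to make the two constructions interchangeable under the involution $\omega$ (which transposes all partitions and sends $h_m\mapsto e_m$); applying $\omega$ lets me fold the ``extra column'' case of $\tau^{l,m}[j]$ into the ``extra row'' case, so I may assume throughout that a single box is appended to the first row.

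\emph{Main mechanism.} The heart of the matter is the step $j\mapsto j+1$. Here the outer shape gains a single box while the target gains exactly one $s_{(m)}$-block, namely an arm of length $l$ together with a leg of length $m-l$. On the wreath-product side, where $a^{\kappa}_{\eta,(m)} = \dim\mathrm{Hom}_{S_{m|\eta|}}\!\big(S^{\kappa},\,\mathrm{Ind}_{S_m\wr S_{|\eta|}}^{S_{m|\eta|}}(\mathbf 1\wr S^{\eta})\big)$, this is precisely the appearance of one further base factor $S_m$ matched with one extra box on the top group $S_{|\eta|}$. Translating back to symmetric functions, I would encode the $j$ growing boxes as the single hook-shaped block $(lj,(m-l)^j)$ and isolate the coefficient of $s_{\sigma^{l,m}\{j\}}$ by repeated skewing (Pieri and the Jacobi--Trudi expansion), reducing the computation to a finite alternating sum of Littlewood--Richardson coefficients of the form $c^{\theta+(N)}_{\phi+(N),\psi}$ and their first-column analogues, in which a long first row or column is added to two of the three partitions. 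These are the stable Littlewood--Richardson coefficients, which are eventually constant in $N$; since only finitely many terms of the sum are nonzero and each one stabilises, the whole sum stabilises, which yields Theorem~\ref{thm:main} and in particular Conjecture~\ref{conj:mark}.

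\emph{Main obstacle.} The crux is the disentangling step: the target $\sigma^{l,m}\{j\}$ grows in two directions at once---a first-row arm of length $lj$ and $j$ legs of common length $m-l$---whereas the outer shape $\tau^{l,m}[j]$ grows in only one, so both growths are driven by the single parameter $j$ and must be shown compatible with plethysm by $s_{(m)}$. Making the reduction to stable Littlewood--Richardson coefficients rigorous requires controlling how the arm and the legs interact inside $s_{\tau^{l,m}[j]}\circ s_{(m)}$, keeping the alternating sum finite, and checking that the threshold $J$ beyond which every term is constant can be chosen uniformly over the finitely many shapes $\kappa,\eta$ produced by the skew expansion. I expect the hook structure of the added block $(lj,(m-l)^j)$ together with the $\omega$-reduction to be exactly what makes this matching work, and I would test the scheme first on the known case $(l,m)=(1,2)$ of~\cite{LO} before treating general $(l,m)$.
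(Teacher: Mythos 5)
There is a genuine gap at the heart of your ``main mechanism''. After the (correct) skew-to-straight reduction, everything rests on the claim that the straight-shape coefficients $a^{\kappa}_{\eta,(m)}$ along the growing sequences can be rewritten, via Pieri and Jacobi--Trudi, as a \emph{finite alternating sum of Littlewood--Richardson coefficients} of the stable form $c^{\theta+(N)}_{\phi+(N),\psi}$. No such reduction exists, and this is precisely where the difficulty of the problem lives. Since $f\mapsto f\circ h_m$ is a ring homomorphism in $f$, Jacobi--Trudi does give $s_\eta\circ s_{(m)}=\det\big(h_{\eta_i-i+j}\circ h_m\big)$, but the entries are themselves plethysms $h_n\circ h_m$, whose Schur expansions are not Littlewood--Richardson numbers (they are exactly the unknown Foulkes-type coefficients); expanding the determinant leaves you with signed sums of \emph{products} of these plethysms, not of LR coefficients. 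If plethysm coefficients were finite signed combinations of stable LR coefficients, Conjecture~\ref{conj:mark} would be nearly trivial. The paper fills this hole with a genuine recursion in $m$: Theorem~\ref{thm:LO} (from \cite{LO}) expresses a coefficient $a^{\nu}_{\lambda',(m+1)}$ as a signed sum of coefficients $a^{\hat{\nu}/\alpha}_{\lambda/\beta,(m)}$ indexed by \emph{skew} shapes, with the crucial feature that the indexing set of the sum is independent of $j$. This forces the three-step induction on $m$ (Remark~\ref{rmk:a}, Lemma~\ref{lem:c}, Proposition~\ref{prop:b}), with base case supplied by the Carr\'e--Thibon/Brion stability theorems --- not by stable LR coefficients. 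Your proposal has no induction on $m$ and no substitute for Theorem~\ref{thm:LO}, so the step $j\mapsto j+1$ is never actually carried out.

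Two secondary problems. First, your $\omega$-reduction is invalid as stated: applying Lemma~\ref{lem:inv} to $a^{\sigma^{l,m}\{j\}}_{\tau^{l,m}[j],(m)}$ replaces the inner partition $(m)$ by $(1^m)$ and conjugates the growing outer shapes, so the resulting sequence is \emph{not} of the form $\sa_{l',m}$; you cannot ``fold the extra-column case into the extra-row case'' while staying inside the family the theorem is about (the parity in the definition of $[j]$ is a bookkeeping device inside the induction of Lemma~\ref{lem:c}, not a symmetry you can quotient by at the outset). Second, in the skew expansion the shapes $\kappa,\eta$ with nonzero contribution are \emph{not} a fixed finite set: they are partitions of sizes growing linearly in $j$, so your ``uniform threshold over finitely many $\kappa,\eta$'' does not parse. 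What is actually needed --- and what Proposition~\ref{prop:b} proves via explicit bijections of LR fillings --- is that for large $j$ the coefficients match along the growth maps, $c^{\delta_{j+1}}_{\eta_1,\beta}=c^{\delta_j}_{\eta,\beta}$ and $c^{\gamma^{j+1}}_{\zeta^1,\alpha}=c^{\gamma^j}_{\zeta,\alpha}$, which re-indexes the $j$-th sum by the fixed finite set of shapes appearing at a single threshold $J$; only then can stability be applied termwise.
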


After introducing the relevant preliminaries in Section~\ref{sec:prelims}, we prove Theorem~\ref{thm:main} in Section~\ref{sec:proof}. We conclude with some observations and open questions in Section~\ref{sec:qs}.

\medskip

\subsection*{Acknowledgements}
We would like to thank Mark Wildon for communicating Conjecture~\ref{conj:mark} to us. We would also like to thank both Mark Wildon and Gunter Malle for their comments on possible further generalisations and open questions, as well as the Mathematisches Forschungsinstitut Oberwolfach (workshop \textit{Character Theory and Categorification}, 2022), where these discussions took place.

\bigskip
\section{Preliminaries}\label{sec:prelims}

We briefly recall the necessary background and notation, and refer the reader to \cite{Mac,Stanley} for further detail. Let $s_\lambda$ denote the Schur function corresponding to the partition $\lambda$, and $\circ$ the plethystic product of symmetric functions. 
For partitions $\lambda$, $\mu$ and $\nu$, the plethysm coefficient $a^\nu_{\lambda,\mu}$ is defined as the multiplicity with which $s_\nu$ appears in the decomposition of the plethysm product $s_\lambda\circ s_\mu$ as a non-negative integer linear combination of Schur functions, namely
\[ a^\nu_{\lambda,\mu} := \langle s_\lambda\circ s_\mu, s_\nu\rangle. \]
Note $a^\nu_{\lambda,\mu}$ equals zero unless $|\nu|=|\lambda|\cdot|\mu|$. It is also straightforward to show that if $\lambda\vdash n$ and $\nu\vdash mn$ then $a^\nu_{\lambda,(m)}=0$ whenever $l(\nu)>n$ (see \cite[Lemma 2.10]{LO}, for example). By convention, we set
\begin{equation}\label{eq:empty}
	a^\nu_{\lambda,\emptyset} = \begin{cases}
		1 & \text{if }\nu=\emptyset\text{ and }l(\lambda)=1,\\
		0 & \text{otherwise}.
	\end{cases}
\end{equation}

For any partition $\lambda=(\lambda_1,\dotsc,\lambda_{l(\lambda)})$ we define its Young diagram $[\lambda]:=\{ (i,j)\in\N\times\N \mid 1\le i\le l(\lambda),\ 1\le j\le \lambda_i\}$. Young diagrams are often depicted using diagrams of boxes as in Figure~\ref{fig:add}, with row and column labellings following that as in a matrix. We also define $\hat{\lambda}$ to be the following:
\[ \hat{\lambda} := \lambda-(1^{l(\lambda)}). \]
In terms of Young diagrams, $[\hat{\lambda}]$ is obtained from $[\lambda]$ by deleting its leftmost column (then shifting what remains one column to the left).
The conjugate of a partition $\lambda$ is denoted by $\lambda'$, and is the partition such that $[\lambda']$ is the transpose of $[\lambda]$. There is a well-known involution on symmetric functions which gives rise to the following (see e.g.~\cite[Ex.~1, Ch.~I.8]{Mac}):

\begin{lemma}\label{lem:inv}
	Let $\lambda$, $\mu$ and $\nu$ be partitions. Then
	\[ a^\nu_{\lambda,\mu} = a^{\nu'}_{\lambda^\ast,\mu'} \quad\text{where}\quad \lambda^\ast:=\begin{cases}
		\lambda & \text{if $|\mu|$ is even},\\
		\lambda' & \text{if $|\mu|$ is odd}.
	\end{cases} \]

\end{lemma}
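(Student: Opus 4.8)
The plan is to deduce the identity from the standard involution $\omega$ on the ring $\Lambda$ of symmetric functions, namely the algebra automorphism determined by $\omega(p_n)=(-1)^{n-1}p_n$ on the power sums, or equivalently by $\omega(s_\lambda)=s_{\lambda'}$ on the Schur basis. Two properties of $\omega$ drive the argument: it is a ring automorphism, and it is an isometry for the Hall inner product, since it permutes the orthonormal Schur basis. Starting from $a^\nu_{\lambda,\mu}=\langle s_\lambda\circ s_\mu,\,s_\nu\rangle$ and applying the isometry gives $a^\nu_{\lambda,\mu}=\langle \omega(s_\lambda\circ s_\mu),\,s_{\nu'}\rangle$. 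Thus the whole lemma reduces to the computation of $\omega(s_\lambda\circ s_\mu)$: once I show this equals $s_{\lambda^\ast}\circ s_{\mu'}$, the inner product reassembles into $a^{\nu'}_{\lambda^\ast,\mu'}$, as desired.

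The heart of the matter is therefore the interaction of $\omega$ with plethysm, which I would record in the form $\omega(f\circ g)=f\circ(\omega g)$ when $g$ is homogeneous of even degree, and $\omega(f\circ g)=(\omega f)\circ(\omega g)$ when $g$ is homogeneous of odd degree. To prove this I would use that $f\mapsto f\circ g$ is a ring homomorphism of $\Lambda_{\mathbb{Q}}=\mathbb{Q}[p_1,p_2,\dotsc]$, as is $\omega$; hence $f\mapsto\omega(f\circ g)$ is a ring homomorphism, and it suffices to verify the claimed identities on the generators $p_n$. Writing $g=\sum_\mu c_\mu p_\mu$ with every $\mu$ of size $d:=\deg g$, and using $p_n\circ p_r=p_{nr}$ (so that $p_n\circ g=\sum_\mu c_\mu p_{n\mu}$), one finds that applying $\omega$ termwise attaches the sign $(-1)^{n|\mu|-\ell(\mu)}=(-1)^{nd}(-1)^{\ell(\mu)}$ to $p_{n\mu}$. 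Comparing this with $p_n\circ(\omega g)$, which attaches the sign $(-1)^{d}(-1)^{\ell(\mu)}$ to the same monomial $p_{n\mu}$, yields the clean relation $\omega(p_n\circ g)=(-1)^{(n-1)d}\,p_n\circ(\omega g)$.

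The remaining step is a parity check on this prefactor. When $d$ is even, $(-1)^{(n-1)d}=1$ for all $n$, so $\omega(p_n\circ g)=p_n\circ(\omega g)$ and hence $\omega(f\circ g)=f\circ(\omega g)$ for all $f$. When $d$ is odd, $(-1)^{(n-1)d}=(-1)^{n-1}$, which is precisely the sign appearing in $\omega(p_n)=(-1)^{n-1}p_n$; thus $\omega(p_n\circ g)=(\omega p_n)\circ(\omega g)$, and since $f\mapsto(\omega f)\circ(\omega g)$ is again a ring homomorphism agreeing with $\omega(f\circ g)$ on generators, we get $\omega(f\circ g)=(\omega f)\circ(\omega g)$ for all $f$. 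Specialising to $f=s_\lambda$ and $g=s_\mu$ and inserting $\omega s_\lambda=s_{\lambda'}$ and $\omega s_\mu=s_{\mu'}$ gives $\omega(s_\lambda\circ s_\mu)=s_{\lambda^\ast}\circ s_{\mu'}$, with $\lambda^\ast$ exactly as in the statement, completing the proof.

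I expect the only genuinely delicate point to be the sign bookkeeping of the second paragraph: specifically, observing that the homogeneity of $g$ makes $(-1)^{n|\mu|}$ constant across the sum, so that the extraneous $(-1)^{\ell(\mu)}$ factors on the two sides cancel and leave a single global sign $(-1)^{(n-1)d}$. Everything else is formal manipulation with the ring-homomorphism and isometry properties of $\omega$, together with the elementary plethysm identity $p_n\circ p_r=p_{nr}$.
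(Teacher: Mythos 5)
Your proof is correct, and it takes the same route the paper does: the paper gives no argument of its own for Lemma~\ref{lem:inv}, citing instead the well-known involution $\omega$ (Macdonald, Ex.~1, Ch.~I.8), and your write-up is precisely a careful reconstruction of that standard argument, with the sign bookkeeping $\omega(p_n\circ g)=(-1)^{(n-1)d}\,p_n\circ(\omega g)$ and the ring-homomorphism reduction to the generators $p_n$ carried out correctly.
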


Next, the so-called Littlewood--Richardson rule allows us to compute the decomposition of the product $s_\lambda s_\mu$ into Schur functions. Specifically, $s_\lambda s_\mu = \sum_\nu c^\nu_{\lambda,\mu} s_\nu$, where the $c^\nu_{\lambda,\mu}$ are called \textit{Littlewood--Richardson coefficients}. One of the various equivalent combinatorial descriptions of the value of $c^\nu_{\lambda,\mu}$ is as follows. 
Let $n\in\N$, $\lambda\vdash n$ and $\cC=(c_1,\dotsc,c_n)$ be a sequence of positive integers. We say that $\cC$ is of type $\lambda$ if $\#\{i\in\{1,\dotsc,n\}\mid c_i=j\}=\lambda_j$ for all $j$. We say an element $c_j$ of $\cC$ is good if $c_j=1$, or if $c_j=i>1$ and $\#\{k\in\{1,\dotsc,j-1\}\mid c_k=i-1\} > \#\{k\in\{1,\dotsc,j-1\}\mid c_k=i\}$. Finally, we say that $\cC$ is good if $c_j$ is good for all $j$. 

\begin{theorem}[Littlewood--Richardson rule]\label{thm:LR}
	Let $\lambda$, $\mu$ and $\nu$ be partitions. Then $c^\nu_{\lambda,\mu}$ is the number of ways of filling each of the boxes of $[\nu]\setminus[\mu]$ with some positive integer such that 
	\begin{enumerate}[label=\textup{(\roman*)}]
		\item reading the numbers from right to left, top to bottom gives a good sequence of type $\lambda$;
		\item the numbers are weakly increasing from left to right along rows; and
		\item the numbers are strictly increasing down columns.
	\end{enumerate}
	In particular, $c^\nu_{\lambda,\mu}=0$ if $|\nu|\ne|\lambda|+|\mu|$.
\end{theorem}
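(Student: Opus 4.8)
The plan is to pass from the product $s_\lambda s_\mu$ to skew Schur functions and then to count tableaux combinatorially via jeu de taquin. First I would use the adjointness between multiplication and the skewing operation on the ring of symmetric functions: since $\{s_\lambda\}$ is an orthonormal basis for the Hall inner product, one has
\[ c^\nu_{\lambda,\mu} = \langle s_\lambda s_\mu, s_\nu\rangle = \langle s_\lambda, s_{\nu/\mu}\rangle, \]
so that $s_{\nu/\mu}=\sum_\lambda c^\nu_{\lambda,\mu}\,s_\lambda$. Recalling the standard combinatorial description $s_{\nu/\mu}=\sum_T x^{\mathrm{wt}(T)}$ (the sum over semistandard Young tableaux $T$ of shape $\nu/\mu$, so that conditions (ii) and (iii) of the statement are precisely semistandardness), the theorem reduces to showing that the coefficient of $s_\lambda$ in this generating function equals the number of such $T$ whose reading word is \emph{good}, that is, a lattice (ballot) word of type $\lambda$. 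The vanishing statement $c^\nu_{\lambda,\mu}=0$ for $|\nu|\ne|\lambda|+|\mu|$ is then immediate, since $[\nu]\setminus[\mu]$ must consist of exactly $|\lambda|$ boxes to admit a filling of type $\lambda$, and must be a genuine skew shape, forcing $[\mu]\subseteq[\nu]$.

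Next I would introduce the two combinatorial engines. Define Knuth equivalence on words and jeu de taquin slides on skew tableaux, and establish that every skew tableau rectifies to a unique straight-shape tableau, its rectification $\mathrm{rect}(T)$, with $\mathrm{wt}(\mathrm{rect}(T))=\mathrm{wt}(T)$; well-definedness rests on the confluence of the slides, equivalently on the fact that each Knuth equivalence class contains exactly one tableau word. Partitioning the skew tableaux of shape $\nu/\mu$ according to their rectification and using weight-preservation gives
\[ s_{\nu/\mu}=\sum_{T_0}\big(\#\{T:\mathrm{shape}(T)=\nu/\mu,\ \mathrm{rect}(T)=T_0\}\big)\,x^{\mathrm{wt}(T_0)}, \]
where the outer sum runs over straight-shape semistandard tableaux $T_0$.

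The heart of the argument is then two lemmas. The first is that $\#\{T:\mathrm{rect}(T)=T_0\}$ depends only on the shape $\lambda$ of $T_0$ and not on $T_0$ itself; granting this, the displayed sum collapses to $\sum_\lambda d^\nu_{\lambda,\mu}\,s_\lambda$ with $d^\nu_{\lambda,\mu}$ the common count, whence $c^\nu_{\lambda,\mu}=d^\nu_{\lambda,\mu}$. The second identifies this count by choosing the most convenient target: taking $T_0=U_\lambda$, the superstandard tableau of shape and weight $\lambda$ (row $i$ filled entirely with $i$'s), I would show that a semistandard $T$ rectifies to $U_\lambda$ if and only if its reading word is a lattice word of type $\lambda$, which is exactly condition (i). Combining the two lemmas yields $c^\nu_{\lambda,\mu}=\#\{T:\mathrm{shape}(T)=\nu/\mu,\ \mathrm{rect}(T)=U_\lambda\}=\#\{\text{good fillings of type }\lambda\}$, as claimed.

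I expect the main obstacle to be the first lemma together with the well-definedness of rectification, since both encode the full strength of the plactic theory: one must show that jeu de taquin is confluent (so that $\mathrm{rect}$ is single-valued) and that the number of skew tableaux rectifying to a given target is insensitive to the target's entries. The standard route is through Knuth's correspondence and the symmetry of RSK, or through Sch\"utzenberger's theory of the plactic monoid; establishing these foundational facts, rather than the final bookkeeping, is where essentially all the work lies. The characterization of rectification to $U_\lambda$ via the lattice condition is comparatively routine once the machinery is in place, following by induction on the slides while tracking how the ballot property of the reading word is preserved.
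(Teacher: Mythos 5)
Your outline is correct, but there is no proof in the paper to compare it against: Theorem~\ref{thm:LR} is stated as classical background in Section~\ref{sec:prelims} and is not proved there; the authors simply refer the reader to \cite{Mac,Stanley}. What you have written is the standard jeu-de-taquin/plactic proof: the reduction $c^\nu_{\lambda,\mu}=\langle s_\lambda, s_{\nu/\mu}\rangle$, the partition of skew semistandard tableaux by rectification, the lemma that the fibre size $\#\{T:\operatorname{rect}(T)=T_0\}$ depends only on the shape of $T_0$, and the identification of the fibre over the superstandard tableau $U_\lambda$ with the fillings whose reading word is a lattice word of type $\lambda$ --- which is exactly the paper's condition (i), since its ``good sequence'' read right to left, top to bottom is the reverse reading word. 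The architecture is sound, and you correctly locate essentially all of the content in the confluence of jeu de taquin (each Knuth class contains a unique tableau word) and in the target-independence lemma; any complete write-up must establish these plactic facts, e.g.\ via Knuth equivalence and the symmetry of RSK as you indicate. Two small points to watch. First, when proving that $\operatorname{rect}(T)=U_\lambda$ is equivalent to the lattice condition, make sure your reading convention matches the paper's (right to left along rows, top to bottom): different sources orient the word differently, and the lattice property is not invariant under reversal. Second, the final clause of the theorem is cleanest via homogeneity --- $s_\lambda s_\mu$ is homogeneous of degree $|\lambda|+|\mu|$, so $\langle s_\lambda s_\mu, s_\nu\rangle=0$ unless $|\nu|=|\lambda|+|\mu|$ --- and this also sidesteps a degenerate reading of $[\nu]\setminus[\mu]$ when $\mu\not\subseteq\nu$: as a bare set difference that region can admit ``good'' fillings even though $c^\nu_{\lambda,\mu}=0$ (e.g.\ $\nu=(2,1)$, $\mu=(3)$, $\lambda=(1)$), so your assertion that the shape is ``forced'' to be a genuine skew shape should instead be treated as the implicit hypothesis $\mu\subseteq\nu$ under which the tableau count is asserted.
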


We call such ways of filling $[\nu]\setminus[\mu]$ satisfying Theorem~\ref{thm:LR}(i)--(iii) \textit{Littlewood--Richardson fillings of $\nu/\mu$ of type $\lambda$}, and let $\cF(\nu/\mu)$ denote the set of all Littlewood--Richardson fillings of $\nu/\mu$ of any type. Note for all $\lambda,\mu,\nu$ that $c^\nu_{\lambda,\mu}=c^\nu_{\mu,\lambda}$, and $c^{\nu'}_{\lambda',\mu'}=c^\nu_{\lambda,\mu}$.

We may extend the indexing of Schur functions and plethysm coefficients to skew shapes. A skew shape is given by $\alpha/\beta:=[\alpha]\setminus[\beta]$ where $\alpha$ and $\beta$ are some partitions. (We do not shift $\alpha/\beta$ to require that the top row and leftmost column be non-empty.) The Schur function indexed by the skew shape $\nu/\lambda$ is defined as $s_{\nu/\lambda}:=\sum_\mu c^\nu_{\lambda,\mu} s_\mu$, and accordingly we define
\begin{equation}\label{eqn:skew-a}
	a^{\nu/\lambda}_{\alpha/\beta,\mu}:= \langle s_{\alpha/\beta} \circ s_\mu, s_{\nu/\lambda}\rangle
\end{equation}
for any partitions $\alpha,\beta,\lambda,\mu,\nu$. Moreover, since in this note we only consider the limiting behaviour of sequences of plethysm coefficients of the form $\big(a^{\sigma^{l,m}\{j\}}_{\tau^{l,m}[j],(m)}\big)_j$, when we have a skew shape $\sigma=\alpha/\beta$ we may without loss of generality assume $\beta\subseteq\alpha$.

Finally, we recall the main result of \cite{LO}, which will be useful in the proof of Theorem~\ref{thm:main} below.

\begin{theorem}[{\cite[Theorem A]{LO}}]\label{thm:LO}
	Let $n,m\in\N$, $k\in\{0,1,\dotsc,n-1\}$ and $\lambda\vdash n$. Let $\nu\vdash mn$ with $l(\nu)=n-k$, and $\hat{\nu}=\nu-(1^{l(\nu)})\vdash(m-1)n+k$. Then
	\[ a^\nu_{\lambda',(m)}=\sum_{i=0}^k(-1)^{k+i}\cdot \sum_{\substack{\alpha\vdash k+(m-1)i\\\beta\vdash i}} a^{\alpha/(k-i)}_{\beta',(m)}\cdot a^{\hat{\nu}/\alpha}_{\lambda/\beta,(m-1)}. \]
\end{theorem}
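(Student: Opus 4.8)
The plan is to recognise the right-hand side as a single Hall inner product and then to prove the resulting symmetric-function identity by passing to finitely many variables. Throughout I write $h_m=s_{(m)}$, let $\langle\,,\,\rangle$ be the Hall inner product (under which the skewing operator $s_\mu^\perp$ is adjoint to multiplication by $s_\mu$, so that $s_{\rho/\mu}=s_\mu^\perp s_\rho$), and use the coproduct expansion $\langle PQ,s_\rho\rangle=\sum_\alpha\langle P,s_\alpha\rangle\langle Q,s_{\rho/\alpha}\rangle$. The first step is to collapse the inner sum over $\alpha$. Since $s_{\alpha/(k-i)}=h_{k-i}^\perp s_\alpha$, we have $a^{\alpha/(k-i)}_{\beta',(m)}=\langle s_{\beta'}\circ h_m,\,h_{k-i}^\perp s_\alpha\rangle=\langle h_{k-i}\cdot(s_{\beta'}\circ h_m),\,s_\alpha\rangle$, while $a^{\hat\nu/\alpha}_{\lambda/\beta,(m-1)}=\langle s_{\lambda/\beta}\circ h_{m-1},\,s_{\hat\nu/\alpha}\rangle$. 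Applying the coproduct expansion with $P=h_{k-i}\cdot(s_{\beta'}\circ h_m)$ and $Q=s_{\lambda/\beta}\circ h_{m-1}$, the sum $\sum_\alpha a^{\alpha/(k-i)}_{\beta',(m)}\,a^{\hat\nu/\alpha}_{\lambda/\beta,(m-1)}$ becomes the single pairing $\langle h_{k-i}\cdot(s_{\beta'}\circ h_m)\cdot(s_{\lambda/\beta}\circ h_{m-1}),\,s_{\hat\nu}\rangle$.

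Summing over $\beta\vdash i$ and over $i$ with the prescribed signs, the entire right-hand side of the theorem equals $\langle H_k,\,s_{\hat\nu}\rangle$, where
\[ H_k:=\sum_{i=0}^k(-1)^{k+i}\,h_{k-i}\sum_{\beta\vdash i}\big(s_{\beta'}\circ h_m\big)\big(s_{\lambda/\beta}\circ h_{m-1}\big). \]
A quick degree count shows $H_k$ is homogeneous of degree $(m-1)n+k=|\hat\nu|$, so the theorem reduces to the clean symmetric-function identity $\langle s_{\lambda'}\circ h_m,\,s_\nu\rangle=\langle H_k,\,s_{\hat\nu}\rangle$ for every $\nu\vdash mn$ with $l(\nu)=n-k$ and $\hat\nu=\nu-(1^{n-k})$. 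This reduction is routine and I am confident in it; everything now rests on this identity.

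To prove it I would work in exactly $N:=n-k=l(\nu)$ variables. All constituents of $s_{\lambda'}\circ h_m$ have at most $n$ rows, so $a^\nu_{\lambda',(m)}$ is the coefficient of $s_\nu(x_1,\dots,x_N)$ in $(s_{\lambda'}\circ h_m)(x_1,\dots,x_N)$; since $l(\nu)=N$ we have $s_\nu=e_N\,s_{\hat\nu}$ in $N$ variables, so this coefficient equals the coefficient of $s_{\hat\nu}$ in the quotient by $e_N=x_1\cdots x_N$ of the length-$N$ part of $s_{\lambda'}\circ h_m$. It therefore suffices to show that, in $N$ variables, $H_k$ agrees with this "full-length part divided by $e_N$". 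The alternating factor $\sum_j(-1)^j h_j$ should implement the passage from "at most $N$ rows" to "exactly $N$ rows" (an inclusion–exclusion over the vanishing, when a variable is set to $0$, of the $s_\mu(x_1,\dots,x_N)$ with $l(\mu)=N$), while the splitting of each term as $(s_{\beta'}\circ h_m)(s_{\lambda/\beta}\circ h_{m-1})$ records how a first column that is full relative to only part of $\lambda$ forces a determinant ($e_N$) twist on that part and drops the symmetric-power degree from $m$ to $m-1$ on the complement $\lambda/\beta$.

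The crux, and what I expect to be the main obstacle, is the plethystic first-column-removal identity underlying the case $k=0$ (where $N=n$), namely $a^\nu_{\lambda',(m)}=a^{\hat\nu}_{\lambda,(m-1)}$ whenever $l(\nu)=n$. In representation-theoretic terms this asserts that the constituents of $\mathbb{S}_{\lambda'}(\mathrm{Sym}^m V)$ of top length $\dim V=n$ are exactly $\det V\otimes\mathbb{S}_{\hat\nu}V$, ranging over the constituents $\mathbb{S}_{\hat\nu}V$ of $\mathbb{S}_{\lambda}(\mathrm{Sym}^{m-1}V)$. I would attempt to establish this by adjoining a single variable $t$ and using $h_m[X+t]=\sum_{a\ge0}t^a\,h_{m-a}[X]$, so that isolating the top power of $t$ in each of the $N$ variables simultaneously extracts the $e_N$-divisible part and replaces a factor $h_m$ by $h_{m-1}$, with the transpose $\lambda\leftrightarrow\lambda'$ tracked through Lemma~\ref{lem:inv}. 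The general case should follow by iterating this one-column removal under the inclusion–exclusion above, the skew terms $s_{\lambda/\beta}\circ h_{m-1}$ and the residual plethysms $s_{\beta'}\circ h_m$ bookkeeping the columns that are \emph{not} full once $l(\nu)=n-k<n$. Checking that the signs and the ranges $\alpha\vdash k+(m-1)i$, $\beta\vdash i$ assemble precisely into $H_k$ is the delicate accounting I expect to occupy the bulk of the argument.
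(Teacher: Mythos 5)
First, note that this paper does not actually prove Theorem~\ref{thm:LO}: it is quoted verbatim from \cite[Theorem A]{LO}, so your attempt can only be judged on its own merits. Your opening reduction is correct and genuinely useful: by adjointness of $h_{k-i}^{\perp}$ and the coproduct expansion $\langle PQ,s_{\hat{\nu}}\rangle=\sum_{\alpha}\langle P,s_\alpha\rangle\langle Q,s_{\hat{\nu}/\alpha}\rangle$, the right-hand side does collapse to $\langle H_k,s_{\hat{\nu}}\rangle$, the sum over $\alpha\vdash k+(m-1)i$ being enforced automatically by degree, so the theorem is equivalent to $\langle s_{\lambda'}\circ h_m,s_\nu\rangle=\langle H_k,s_{\hat{\nu}}\rangle$ for all $\nu\vdash mn$ with $l(\nu)=n-k$. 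The passage to $N=n-k$ variables is also sound ($a^\mu_{\lambda',(m)}=0$ for $l(\mu)>n$, and $s_\nu=e_N s_{\hat{\nu}}$ in $N$ variables when $l(\nu)=N$). But everything after that is a plan, not a proof, and its two load-bearing steps are precisely the content of the theorem; neither is carried out.

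Concretely: (i) the $k=0$ case $a^\nu_{\lambda',(m)}=a^{\hat{\nu}}_{\lambda,(m-1)}$ for $l(\nu)=n$ is true, but your proposed mechanism---adjoin $t$, expand $h_m[X+t]=\sum_a t^a h_{m-a}[X]$, and ``isolate the top power of $t$ in each of the $N$ variables simultaneously''---is not a well-defined operation on a plethysm: the alphabet of $s_{\lambda'}\circ h_m$ consists of \emph{all} degree-$m$ monomials in $x_1,\dotsc,x_N$, which mix the variables, so divisibility of a product of such monomials by $e_N=x_1\cdots x_N$ is not detected one $h_m$-factor at a time, and the parity-dependent transpose $\lambda'\leftrightarrow\lambda$ must emerge from the argument itself rather than be ``tracked through'' Lemma~\ref{lem:inv}, which relates a coefficient to its conjugate but does not remove a column. (ii) For $k>0$, the inclusion--exclusion you gesture at is not a fixed series $\sum_j(-1)^jh_j$ multiplying a fixed generating function: the factor accompanying $h_{k-i}$ in $H_k$ varies with $i$ through the sum over $\beta\vdash i$, and the interplay between the column-removal on $\nu$ and the splitting $\lambda\rightsquigarrow(\beta,\lambda/\beta)$ with plethysm degrees $m$ and $m-1$ is exactly the nontrivial combinatorial content. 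You acknowledge this yourself (``the delicate accounting I expect to occupy the bulk of the argument''), which is an accurate self-assessment: what you have is a correct reformulation of the statement as a symmetric-function identity plus a plausible heuristic for why it should hold, while the actual proof of that identity remains entirely in \cite{LO}.
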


\bigskip
\section{Proof of Theorem~\ref{thm:main}}\label{sec:proof}
We prove Theorem~\ref{thm:main} by induction on $m$, for each fixed $l\in\N_0$. There are three main steps to the proof as follows:
\begin{itemize}
	\item[(a)] For all partitions $\nu,\lambda$, $\sa_{m,m}(\nu,\lambda)$ stabilises for all $m\in\N$, and $\sa_{0,1}(\nu,\lambda)$ is constant (Remark~\ref{rmk:a});
	
	\item[(b)] $\sa_{l,m}(\nu,\lambda)$ stabilises for all partitions $\nu,\lambda \implies \sa_{l,m}(\sigma,\tau)$ stabilises for all skew shapes $\sigma,\tau$;
	
	\item[(c)] $\sa_{l,m}(\sigma,\tau)$ stabilises for all skew shapes $\sigma,\tau \implies \sa_{l,m+1}(\nu,\lambda)$ stabilises for all partitions $\nu,\lambda$.
\end{itemize}

Below, Proposition~\ref{prop:b} gives the proof of Step (b), while Step (c) follows from Theorem~\ref{thm:LO} as shown in the proof of Lemma~\ref{lem:c}.

\begin{remark}\label{rmk:a}
	Step (a) above forms the base case for our induction.
	That $\sa_{m,m}(\nu,\lambda)$ stabilises for all partitions $\nu$ and $\lambda$ follows from \cite{CT} or \cite{Brion}: more generally, for all partitions $\mu$, \cite[Thm 4.2]{CT} states that $\big(a^{\nu+(|\mu|j)}_{\lambda+(j),\mu}\big)_j$ stabilises, in particular with limit zero if $l(\mu)>1$, while \cite[\textsection 2.6, Corollary 1]{Brion} states that $\big( a^{\nu+j\mu}_{\lambda+(j),\mu}\big)_j$ is weakly increasing and eventually constant, where $j\mu:=(j\mu_1,j\mu_2,\dotsc)$.
	
	We could have used the sequence $\sa_{0,0}(\nu,\lambda)$, defined analogously and evaluated using \eqref{eq:empty}, as an alternative starting point, although it is easy enough to observe that $\sa_{l,1}(\nu,\lambda)$ is constant for any $l\in\{0,1\}$ and any partitions $\nu$ and $\lambda$. Indeed, for any $j$ we have that $a^{\nu^{l,1}\{j\}}_{\lambda^{l,1}[j],(1)}=\delta_{\nu^{l,1}\{j\},\lambda^{l,1}[j]}=\delta_{\nu,\lambda}$, where $\delta$ denotes the Kronecker delta.
	\hfill$\lozenge$
\end{remark}

\begin{lemma}\label{lem:c}
	Let $m\in\N$ and $l\in\{0,1,\dotsc,m\}$. Suppose $\sa_{l,m}(\sigma,\tau)$ stabilises for all skew shapes $\sigma$ and $\tau$. Then $\sa_{l,m+1}(\nu,\lambda)$ stabilises for all partitions $\nu$ and $\lambda$.
\end{lemma}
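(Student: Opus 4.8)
The plan is to apply Theorem~\ref{thm:LO} (with $m$ replaced by $m+1$) to each term of the sequence $\sa_{l,m+1}(\nu,\lambda)$, and to recognise that the resulting expansion writes each term as a fixed, finite integer-linear combination of terms of sequences of the form $\sa_{l,m}(\sigma,\tau)$, to which the hypothesis then applies. Write $T_j:=\nu^{l,m+1}\{j\}$ and $B_j:=\lambda^{l,m+1}[j]$, so that the $j$-th term of $\sa_{l,m+1}(\nu,\lambda)$ is $a^{T_j}_{B_j,(m+1)}$. Since $|T_j|=|\nu|+(m+1)j$ and $|B_j|=|\lambda|+j$, the coefficient vanishes for all $j$ unless $|\nu|=(m+1)|\lambda|$, so we may assume this; and using that $a^{\mu}_{\kappa,(m+1)}=0$ whenever $l(\mu)>|\kappa|$, we may discard the cases with $l(T_j)>|B_j|$ (which in particular disposes of the degenerate case $\nu=\emptyset$, $l\ge 1$, where $\lambda=\emptyset$ is forced and $l(T_j)=j+1>j=|B_j|$). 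In the surviving regime one checks $l(T_j)=l(\nu)+j$.

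First I would set up the substitution into Theorem~\ref{thm:LO}: take its ``$\lambda'$'' to be $B_j$ (so that its ``$\lambda$'' is $B_j'$), its ``$\nu$'' to be $T_j$, and its ``$n$'' to be $|B_j|=|\lambda|+j$. Then the hypothesis $l(\nu_{\mathrm{LO}})=n-k$ forces
\[ k=|B_j|-l(T_j)=(|\lambda|+j)-(l(\nu)+j)=|\lambda|-l(\nu), \]
which is crucially independent of $j$ (and non-negative in the surviving regime). Hence Theorem~\ref{thm:LO} expands $a^{T_j}_{B_j,(m+1)}$ as
\[ \sum_{i=0}^{k}(-1)^{k+i}\sum_{\substack{\alpha\vdash k+mi\\ \beta\vdash i}} a^{\alpha/(k-i)}_{\beta',(m+1)}\cdot a^{\widehat{T_j}/\alpha}_{B_j'/\beta,(m)}, \]
where the outer index set $\{(i,\alpha,\beta)\}$ is finite and does not depend on $j$, and the scalar $a^{\alpha/(k-i)}_{\beta',(m+1)}$ is a constant involving no $j$.

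The key identifications are then $\widehat{T_j}=\hat\nu^{l,m}\{j\}$ and $B_j'=(\lambda')^{l,m}[j]$. The first holds because deleting the leftmost column of $\nu^{l,m+1}\{j\}=\bigl(\nu+(lj)\bigr)\sqcup\bigl((m+1-l)^j\bigr)$ shortens the arm row to $\nu_1-1+lj$, the remaining rows of $\nu$ to $\hat\nu$, and each of the $j$ legs from $m+1-l$ to $m-l$, producing exactly $\hat\nu^{l,m}\{j\}$ (this is where $\nu\neq\emptyset$ or $l=0$ is used). The second follows from the conjugation identities $(\rho+(j))'=\rho'\sqcup(1^j)$ and $(\rho\sqcup(1^j))'=\rho'+(j)$ together with a parity flip: the construction $[\,\cdot\,]$ at level $m+1$ is governed by the parity of $(m+1)+l$, which is opposite to the parity of $m+l$ governing level $m$, so conjugating $\lambda^{l,m+1}[j]$ yields precisely $(\lambda')^{l,m}[j]$. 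Substituting these, and invoking $(\hat\nu^{l,m}\{j\})/\alpha=(\hat\nu/\alpha)^{l,m}\{j\}$ and $((\lambda')^{l,m}[j])/\beta=(\lambda'/\beta)^{l,m}[j]$ from Definition~\ref{def:li}, the $j$-dependent factor $a^{\widehat{T_j}/\alpha}_{B_j'/\beta,(m)}$ becomes exactly the $j$-th term of $\sa_{l,m}(\hat\nu/\alpha,\ \lambda'/\beta)$.

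Thus each term of $\sa_{l,m+1}(\nu,\lambda)$ is a fixed finite integer-linear combination, over a $j$-independent index set, of terms of the sequences $\sa_{l,m}(\hat\nu/\alpha,\ \lambda'/\beta)$; each of these stabilises by the inductive hypothesis, and a finite integer-linear combination of eventually-constant sequences is eventually constant, giving the claim. I expect the main obstacle to be the bookkeeping of the index-matching rather than any analytic difficulty: specifically, verifying that $k$ (and hence the entire outer sum) is independent of $j$, and checking the conjugation-and-parity identity $B_j'=(\lambda')^{l,m}[j]$. The degenerate shapes must also be checked not to spoil $\widehat{T_j}=\hat\nu^{l,m}\{j\}$, which is precisely why the length-bound vanishing is invoked at the outset.
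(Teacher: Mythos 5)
Your proposal is correct and follows essentially the same route as the paper's own proof: apply Theorem~\ref{thm:LO} at level $m+1$ with $k=|\lambda|-l(\nu)$ independent of $j$, identify $\hat{\nu^j}/\alpha=(\hat\nu/\alpha)^{l,m}\{j\}$ and $(\lambda_j)'/\beta=(\lambda'/\beta)^{l,m}[j]$ via the parity flip, and conclude from the stability of the skew sequences $\sa_{l,m}(\hat\nu/\alpha,\lambda'/\beta)$. Your explicit handling of the degenerate case $\nu=\emptyset$, $l\ge 1$ (where the column-deletion identity would fail) is in fact slightly more careful than the paper, which subsumes it into its ``without loss of generality'' reduction.
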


\begin{proof}
	Fix partitions $\nu$ and $\lambda$. For ease of notation, let $\sa_{l,m+1}:=\sa_{l,m+1}(\nu,\lambda)=\big( a^{\nu^{l,m+1}\{j\}}_{\lambda^{l,m+1}[j],(m+1)} \big)_j$, and $\nu^j:=\nu^{l,m+1}\{j\}$ and $\lambda_j:=\lambda^{l,m+1}[j]$. We may without loss of generality assume that $\lambda\vdash n$, $\nu\vdash (m+1)n$ and $l(\nu)=n-k$ for some $n,k\in\N_0$, else $a^{\nu^j}_{\lambda_j,(m+1)}=0$ for all $j$. In particular, $\lambda_j\vdash n+j$ and $\nu^j\vdash (m+1)(n+j)$, with $l(\nu^j)=(n+j)-k$, so by Theorem~\ref{thm:LO} we have for any $j\in\N$ that
	\begin{equation}\label{eqn:c}
		a^{\nu^j}_{\lambda_j,(m+1)} = \sum_{i=0}^k(-1)^{k+i} \sum_{\substack{\alpha\vdash k+mi\\\beta\vdash i}} a^{\alpha/(k-i)}_{\beta',(m+1)}\cdot a^{\hat{\nu^j}/\alpha}_{(\lambda_j)'/\beta,(m)}.
	\end{equation}
	The indexing on the sums in \eqref{eqn:c} is independent of $j$, and so it will suffice to show that each summand $(-1)^{k+i}\cdot a^{\alpha/(k-i)}_{\beta',(m+1)}\cdot a^{\hat{\nu^j}/\alpha}_{(\lambda_j)'/\beta,(m)}$ stabilises as $j \rightarrow \infty$, for any given $i$, $\alpha$, $\beta$. In other words, it suffices to show that the sequence $\big(a^{\hat{\nu^j}/\alpha}_{(\lambda_j)'/\beta,(m)}\big)_j$ stabilises.
	Now $\hat{\nu^j}=\big(\hat{\nu}+(lj)\big)\sqcup\big((m-l)^j\big)$ and
	\[ (\lambda_j)' = \begin{cases}
		\lambda'\sqcup(1^j) & \text{if }m+1+l\text{ is even}\\
		\lambda'+(j) & \text{if }m+1+l\text{ is odd}
	\end{cases}\ =\ \begin{cases}
	\lambda'+(j) & \text{if }m+l\text{ is even}\\
	\lambda'\sqcup(1^j) & \text{if }m+l\text{ is odd},
	\end{cases} \]
	so $\hat{\nu^j}/\alpha = \sigma^{l,m}\{j\}$ and $(\lambda_j)'/\beta=\tau^{l,m}[j]$ where $\sigma=\hat{\nu}/\alpha$ and $\tau=\lambda'/\beta$. By assumption, $\sa_{l,m}(\sigma,\tau)=\big(a^{\sigma^{l,m}\{j\}}_{\tau^{l,m}[j],(m)}\big)_j$ stabilises, and hence $\sa_{l,m+1}=\big(a^{\nu^j}_{\lambda_j,(m+1)}\big)_j$ stabilises by \eqref{eqn:c}.
\end{proof}

\begin{proposition}\label{prop:b}
	Let $m\in\N$ and $l\in\{0,1,\dotsc,m\}$. Suppose $\sa_{l,m}(\nu,\lambda)$ stabilises for all partitions $\nu$ and $\lambda$. Then $\sa_{l,m}(\sigma,\tau)$ stabilises for all skew shapes $\sigma$ and $\tau$.
\end{proposition}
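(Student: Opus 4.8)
The plan is to reduce the skew case to the partition case by expanding the two skew Schur functions that appear and then matching the pieces back up with partition sequences $\sa_{l,m}(\cdot,\cdot)$, to which the hypothesis applies. Write $\sigma=\mu/\nu$ and $\tau=\rho/\pi$, with $\nu\subseteq\mu$ and $\pi\subseteq\rho$. Since plethysm is additive in its outer argument and the Hall inner product is bilinear, expanding $s_{\tau^{l,m}[j]}=\sum_\kappa c^{\rho^{l,m}[j]}_{\pi,\kappa}s_\kappa$ and $s_{\sigma^{l,m}\{j\}}=\sum_\eta c^{\mu^{l,m}\{j\}}_{\nu,\eta}s_\eta$ turns the $j$th term $a^{\sigma^{l,m}\{j\}}_{\tau^{l,m}[j],(m)}$ of $\sa_{l,m}(\sigma,\tau)$ into
\[ a^{\sigma^{l,m}\{j\}}_{\tau^{l,m}[j],(m)} = \sum_{\eta,\kappa} c^{\mu^{l,m}\{j\}}_{\nu,\eta}\,c^{\rho^{l,m}[j]}_{\pi,\kappa}\,a^{\eta}_{\kappa,(m)}. \]
The indexing sets a priori depend on $j$, so the task is to reorganise this into finitely many sequences that are each eventually constant.

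The key step is a stability statement for the two skew expansions: for all large $j$,
\[ s_{\sigma^{l,m}\{j\}}=\sum_{\xi,r} d_{\xi,r}\,s_{\xi^{l,m}\{j-r\}}, \qquad s_{\tau^{l,m}[j]}=\sum_{\zeta,s} e_{\zeta,s}\,s_{\zeta^{l,m}[j-s]}, \]
where the sums are finite, the nonnegative integers $d_{\xi,r},e_{\zeta,s}$ are independent of $j$, and the shifts $r,s$ are bounded. To prove this I would analyse, via the Littlewood--Richardson rule (Theorem~\ref{thm:LR}), which straight shapes $\eta$ occur in $s_{(\mu^{l,m}\{j\})/\nu}$: such an $\eta$ arises by deleting from $\mu^{l,m}\{j\}$ a skew shape of size $|\nu|$ carrying an LR filling of content $\nu$. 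As $\mu^{l,m}\{j\}$ has first row $\mu_1+lj$ and a block of $j$ identical legs of length $m-l$, for large $j$ only a bounded part of the boundary is affected: the partition constraint forces deleted leg-boxes to lie at the bottom of the block, so the long first row and all but at most a bounded number of the legs survive intact, the leftover modifications being absorbed into a bounded partition $\xi$. A count then shows every such $\eta$ has the form $\xi^{l,m}\{j-r\}$, and that $c^{\mu^{l,m}\{j\}}_{\nu,\xi^{l,m}\{j-r\}}$ stabilises in $j$ --- the latter reducing to the classical stability of Littlewood--Richardson coefficients under lengthening a row (and, by conjugation, a column). The same (easier) analysis handles $s_{\tau^{l,m}[j]}$, which grows only by a single long row or column.

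Granting the stable expansions, the displayed sum becomes the finite sum $\sum d_{\xi,r}e_{\zeta,s}\,a^{\xi^{l,m}\{j-r\}}_{\zeta^{l,m}[j-s],(m)}$, and each summand is almost a term of a partition sequence $\sa_{l,m}$, except that the two shifts $j-r$ and $j-s$ may disagree. The crucial device for fixing this is a reshifting identity: for any partition $\xi$ and any sufficiently large $a\ge 0$ (depending only on $\xi,l,m$) there is a partition $\xi\langle a\rangle$, obtained by transferring $a$ legs into the base and compensating by $la$ in the first row, with $\xi^{l,m}\{p\}=(\xi\langle a\rangle)^{l,m}\{p-a\}$ for all large $p$; similarly $\zeta^{l,m}[q]=(\zeta\langle b\rangle)^{l,m}[q-b]$, where $\zeta\langle b\rangle$ is $\zeta+(b)$ or $\zeta\sqcup(1^b)$ according to the parity of $m+l$. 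Choosing a single fixed $c$ (large enough that the $\xi$-side rearrangements are all valid) I bring both shifts down to the common value $j-c$, giving
\[ a^{\xi^{l,m}\{j-r\}}_{\zeta^{l,m}[j-s],(m)} = a^{(\xi\langle c-r\rangle)^{l,m}\{j-c\}}_{(\zeta\langle c-s\rangle)^{l,m}[j-c],(m)}, \]
which is the $(j-c)$th term of the partition sequence $\sa_{l,m}\big(\xi\langle c-r\rangle,\zeta\langle c-s\rangle\big)$. By hypothesis each such sequence is eventually constant, hence so is its shift by $c$, and a finite sum of eventually constant sequences is eventually constant; this proves that $\sa_{l,m}(\sigma,\tau)$ stabilises.

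I expect the main obstacle to be the stable-expansion lemma of the second paragraph; the rest is bilinear bookkeeping together with the elementary reshifting identities. Pinning down that only the prescribed shapes $\xi^{l,m}\{j-r\}$ occur, and that their multiplicities are eventually $j$-independent, requires a careful examination of where the content-$\nu$ filling can sit once $j$ is large, and it is there that the real combinatorial work lies.
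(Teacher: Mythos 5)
Your overall architecture coincides with the paper's: expand $a^{\sigma^{l,m}\{j\}}_{\tau^{l,m}[j],(m)}$ bilinearly as $\sum_{\eta,\kappa} c^{\mu^{l,m}\{j\}}_{\nu,\eta}\, c^{\rho^{l,m}[j]}_{\pi,\kappa}\, a^{\eta}_{\kappa,(m)}$, show that the Littlewood--Richardson data attached to the two growing shapes becomes $j$-independent, and then apply the partition-case hypothesis termwise. Your reshifting identity is also correct (with $\xi\langle a\rangle=\xi^{l,m}\{a\}$ it holds once $\xi_1+la\ge m-l$), though it is bookkeeping the paper never needs, because the paper proves its stabilisation with one uniform shift: the constituents of $s_{\sigma^{l,m}\{J+k\}}$ are exactly the $\zeta^{l,m}\{k\}$ for $\zeta$ a constituent of $s_{\sigma^{l,m}\{J\}}$, with equal multiplicities. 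The genuine gap is that your ``stable-expansion lemma'', which you defer as the place ``where the real combinatorial work lies'', is not a reduction to known results --- it \emph{is} the theorem, and proving it is essentially the entire content of the paper's proof. Moreover, the specific reduction you propose does not work as stated: the multiplicity stabilisation is not an instance of ``classical stability of LR coefficients under lengthening a row (and, by conjugation, a column)''. Incrementing $j$ lengthens the first row by $l$ \emph{and} appends a new row of length $m-l$ at the bottom; the latter operation conjugates to adding $(1^{m-l})$ to both shapes, i.e.\ growing the first $m-l$ rows of the conjugates simultaneously, which is single-column growth in the original picture only when $m-l\le 1$ (Wildon's case $l=m-1$). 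Even in that case, to control the diagonal sequence $j\mapsto c^{\mu^{l,m}\{j\}}_{\nu,\xi^{l,m}\{j-r\}}$ one must make the row-direction and leg-direction stabilisation thresholds uniform in the other growing parameter, and no such uniformity comes from quoting the classical one-row result as a black box.

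To close the gap you need an actual argument for LR stabilisation under this mixed growth. Two routes: (i) the paper's --- explicit bijections $\cF(\delta_j/\beta)\to\cF(\delta_{j+1}/\beta)$ (fill the new box with $1$, resp.\ with one more than the maximum entry, according to whether a row or a column grows) and, after conjugating, $\cF\big((\gamma^j)'/\alpha'\big)\to\cF\big((\gamma^{j+1})'/\alpha'\big)$ (fill $k$ into the new box of row $k$ for $k\le m-l$, and continue column $1$ with $w+1,\dotsc,w+l$), which simultaneously pin down which types occur and match their multiplicities across consecutive $j$; or (ii) a disconnection argument --- show that for large $j$ the skew shape $\mu^{l,m}\{j\}/\xi^{l,m}\{j-r\}$ splits into pieces occupying pairwise disjoint rows and columns (a horizontal strip in row $1$, a fixed middle piece, and an $(m-l)$-wide rectangle at the bottom), each independent of $j$ up to translation, so that its skew Schur function, and hence $c^{\mu^{l,m}\{j\}}_{\nu,\xi^{l,m}\{j-r\}}$, is eventually constant. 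Your boundary analysis of which shapes $\eta$ can occur (deletions in the leg block forced to its bottom, everything of the form $\xi^{l,m}\{j-r\}$ with bounded $r$) is sound and is the right first half of route (ii); but without the second half carried out, the proof is incomplete exactly at its crux.
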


\begin{proof}
	Fix skew shapes $\sigma$ and $\tau$ and suppose $\sigma=\gamma/\alpha$ and $\tau=\delta/\beta$ where $\alpha,\beta,\gamma,\delta$ are partitions. 
	For ease of notation, let $\gamma^j:=\gamma^{l,m}\{j\}$ and $\delta_j:=\delta^{l,m}[j]$, so that $\sigma^{l,m}\{j\}=\gamma^j/\alpha$ and $\tau^{l,m}[j]=\delta_j/\beta$. More generally, for any partitions $\zeta$ and $\eta$ we also let $\zeta^j:=\zeta^{l,m}\{j\}$ and $\eta_j:=\eta^{l,m}[j]$.
	By definition,
	\[ a^{\gamma^j/\alpha}_{\delta_j/\beta,(m)} = \sum_{\substack{\eta\vdash|\delta_j|-|\beta|\\\zeta\vdash|\gamma^j|-|\alpha|}} c^{\delta_j}_{\eta,\beta}\cdot c^{\gamma^j}_{\zeta,\alpha} \cdot a^{\zeta}_{\eta,(m)}. \]
	From Theorem~\ref{thm:LR}, clearly for all sufficiently large $j$ we have a bijection $\cF(\delta_j/\beta)\to\cF(\delta_{j+1}/\beta)$ given by $\ssf\mapsto\sg$, where the Littlewood--Richardson filling $\sg$ is obtained from the filling $\ssf$ by putting the number $x$ into the box $[\delta_{j+1}]\setminus[\delta_j]$: if $\delta_j=\delta+(j)$ then set $x=1$, while if $\delta_j=\delta\sqcup(1^j)$ then set $x=1+\max\{\text{numbers appearing in }\ssf\}$. 
	In particular, if the type of $\ssf$ is $\eta$, then the type of $\sg$ is $\eta+(1)$ or $\eta\sqcup(1)$ respectively, which equals $\eta^{l,m}[1]=\eta_1$ in either case. Hence $c^{\delta_j}_{\eta,\beta}=c^{\delta_{j+1}}_{\eta_1,\beta}$, and moreover, if $c^{\delta_j}_{\epsilon,\beta}>0$ then $\epsilon$ is of the form $\eta_1$ for some $\eta\vdash|\delta_j|-|\beta|$.
	
	Similarly, for all sufficiently large $j$ we have a bijection $\cF\big((\gamma^j)'/\alpha'\big)\to\cF\big((\gamma^{j+1})'/\alpha'\big)$ given by $\ssf\mapsto\sg$, where $\sg$ is obtained from $\ssf$ by putting the number $k$ into the box in row $k$ of $[(\gamma^{j+1})']\setminus[(\gamma^j)']$ for each $k\in\{1,2,\dotsc,m-l\}$, and putting the numbers $w+1,w+2,\dots,w+l$ into the boxes in column 1 of $[(\gamma^{j+1})']\setminus[(\gamma^j)']$, where $w$ is the number filled into the bottom of column 1 of $(\gamma^j)'$ in $\ssf$. In particular, if the type of $\ssf$ is $\zeta'$, then the type of $\sg$ is $\big(\zeta'+(1^{m-l})\big)\sqcup(1^l)=(\zeta^{l,m}\{1\})'=(\zeta^{1})'$.
	
	Now $c^{\gamma^j}_{\zeta,\alpha}=c^{(\gamma^j)'}_{\zeta',\alpha'}$, so there exists $J\in\N$ such that for all $k\ge 0$,
	\[ a^{\gamma^{J+k}/\alpha}_{\delta_{J+k}/\beta,(m)} 
	= \sum_{\substack{\eta\vdash|\delta_J|-|\beta|\\\zeta\vdash|\gamma^J|-|\alpha|}} 
	c^{\delta_{J+k}}_{\eta_k,\beta} \cdot c^{\gamma^{J+k}}_{\zeta^k,\alpha} \cdot a^{\zeta^k}_{\eta_k,(m)} 
	= \sum_{\substack{\eta\vdash|\delta_J|-|\beta|\\\zeta\vdash|\gamma^J|-|\alpha|}} c^{\delta_J}_{\eta,\beta} \cdot c^{\gamma^J}_{\zeta,\alpha} \cdot a^{\zeta^k}_{\eta_k,(m)}. \]
	But $\sa_{l,m}(\zeta,\eta) = \big( a^{\zeta^{l,m}\{k\}}_{\eta^{l,m}[k],(m)} \big)_k = \big( a^{\zeta^k}_{\eta_k,(m)} \big)_k$ stabilises for each $\zeta$ and $\eta$ by assumption, say when $k\ge K(\zeta,\eta)$. Letting $K:=\max\{K(\zeta,\eta)\mid \eta\vdash|\delta_J|-|\beta|, \zeta\vdash|\gamma^J|-|\alpha|\}$, then $a^{\gamma^j/\alpha}_{\delta_j/\beta,(m)} = a^{\gamma^{J+K}/\alpha}_{\delta_{J+K}/\beta,(m)}$ for all $j\ge J+K$. That is, the sequence $\sa_{l,m}(\sigma,\tau) = \big( a^{\sigma^{l,m}\{j\}}_{\tau^{l,m}[j],(m)} \big)_j$ is eventually constant, as desired.
\end{proof}

\begin{proof}[Proof of Theorem~\ref{thm:main}]
	This follows from Remark~\ref{rmk:a}, Lemma~\ref{lem:c} and Proposition~\ref{prop:b}.
\end{proof}

\bigskip
\section{Open questions}\label{sec:qs}

Theorem~\ref{thm:main} gives new families of sequences of plethysm coefficients which are eventually constant. This leads to the following natural questions.

\begin{question}\label{q:malle}
	What is the limit of the sequence $\sa_{l,m}(\sigma,\tau)$? When is this limit strictly positive?
\end{question}

Our original motivation for considering sequences of plethysm coefficients which led us to define $\sa_{l,m}(\sigma,\tau)$ was the conjecture of Bessenrodt, Bowman and Paget that $\sa_{1,2}(\nu,\lambda)$ should be weakly increasing, for all partitions $\nu$ and $\lambda$ \cite[Conjecture 1.2]{BBP}. Furthermore, it follows from \cite[\textsection 2.6, Corollary 1]{Brion} that for all $m\in\N$, the sequence $\sa_{m,m}(\nu,\lambda)$ not only stabilises but is also weakly increasing, for all $\nu$ and $\lambda$. By applying Lemma~\ref{lem:inv}, it immediately follows that $\sa_{0,m}(\nu,\lambda)$ also stabilises and is weakly increasing for all $\nu$ and $\lambda$. Indeed, Brion's result covers coefficients of the form $a^{\nu'+j(1^m)}_{\lambda+(j),(1^m)}$ (see Remark~\ref{rmk:a} above), which by Lemma~\ref{lem:inv} equals $a^{(\nu'+(j^m))'}_{(\lambda+(j))^\ast,(m)} = a^{\nu^{0,m}\{j\}}_{\lambda^{0,m}[j],(m)}$. Thus we make the following generalisation of \cite[Conjecture 1.2]{BBP}.

\begin{conjecture}
	For all $m\in\N$ and $l\in\{0,1,\dotsc,m\}$, and all partitions $\nu$ and $\lambda$, the sequence of plethysm coefficients $\sa_{l,m}(\nu,\lambda)$ is weakly increasing.
\end{conjecture}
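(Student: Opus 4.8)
The target property, that each sequence $\sa_{l,m}(\nu,\lambda)$ be \emph{weakly increasing}, is strictly stronger than the eventual constancy established in Theorem~\ref{thm:main}, and the inductive scheme used there cannot be reused. The engine of that scheme is the alternating sum of Theorem~\ref{thm:LO}: while a signed combination of eventually constant sequences remains eventually constant, a signed combination of weakly increasing sequences need not be weakly increasing. Thus only the base step survives, and the plan must instead produce, for every $j$, an order-preserving (injective) comparison between the $j$-th and $(j+1)$-th terms.

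First I would record the two boundary cases, both of which are already weakly increasing: for $l=m$ the sequence $\sa_{m,m}(\nu,\lambda)=\big(a^{\nu+(mj)}_{\lambda+(j),(m)}\big)_j$ is Brion's sequence for $\mu=(m)$, and for $l=0$ Lemma~\ref{lem:inv} turns $\sa_{0,m}(\nu,\lambda)$ into $\big(a^{\nu'+j(1^m)}_{\lambda+(j),(1^m)}\big)_j$, Brion's sequence for $\mu=(1^m)$; both are weakly increasing by \cite[\textsection 2.6, Corollary 1]{Brion}, as observed in Section~\ref{sec:qs}. The content is therefore the interior range $0<l<m$. It is useful to describe the increment concretely: passing from $j$ to $j+1$ enlarges $\lambda^{l,m}[j]$ by a single box, and enlarges $\nu^{l,m}\{j\}$ by an \emph{arm} of $l$ boxes at the end of row $1$ together with a new \emph{bottom row} of $m-l$ boxes.

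My primary plan is to adapt Brion's geometric (semigroup) injection, which realises $a^\nu_{\lambda,\mu}$ as the dimension of a space of invariant sections and deduces monotonicity from multiplication by a fixed nonzero section along a \emph{fixed} weight direction. The obstruction becomes visible through Lemma~\ref{lem:inv}: conjugating the coefficient (so that the plethysm base $(m)$ becomes $(1^m)$) converts the new bottom rows of $\nu^{l,m}\{j\}$ into a fixed top-addition $+j(1^{m-l})$, but simultaneously converts the arm $+(lj)$ in row $1$ into $lj$ drifting unit rows appended to $\nu'$; conjugating in the other sense merely trades these roles. At $l=0$ the arm is absent and at $l=m$ the legs are absent, so in each boundary case a single conjugation removes all drift and Brion applies directly. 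For interior $l$ both an arm and legs are present, and no conjugation removes both drifts at once.

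This coupling is where I expect the argument to be hardest. The increment to $\nu$ is not a fixed dominant weight: it appends rows whose row-index drifts downward with $j$, so Brion's ``multiply by a fixed section'' injection does not apply, and one would need a version of the argument that accommodates a drifting weight — for instance via a stabilised ambient group, or a finitely generated semigroup along the drifting ray — which is precisely the step that does not decouple into the two boundary injections. An equivalent combinatorial route would exhibit an explicit injection between objects computing the two coefficients by filling the fixed arm-plus-leg gadget in a forced (lexicographically minimal) way; but a positive combinatorial model for $s_\lambda\circ s_{(m)}$ for general $\lambda$ is exactly Stanley's Problem~9, and producing a sign-free analogue of Theorem~\ref{thm:LO} (which would yield monotonicity by induction) is of the same difficulty. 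I would therefore first test the geometric approach on the simplest open instance $\sa_{1,2}(\nu,\lambda)$, the original conjecture of \cite[Conjecture 1.2]{BBP}.
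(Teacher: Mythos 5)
This statement is a \emph{conjecture}: the paper does not prove it, and offers no proof for you to be compared against. Your write-up is accordingly a research plan rather than a proof, and it should be assessed as such. What you do establish is exactly what the paper itself establishes in Section~\ref{sec:qs}: the case $l=m$ is Brion's weakly increasing sequence for $\mu=(m)$ by \cite[\S 2.6, Corollary 1]{Brion}, and the case $l=0$ follows by applying Lemma~\ref{lem:inv} to Brion's sequence for $\mu=(1^m)$. Your diagnosis of why the paper's machinery cannot be upgraded is also accurate: Theorem~\ref{thm:LO} is an alternating sum, and a signed combination of weakly increasing sequences need not be weakly increasing, which is precisely why the paper's induction only delivers eventual constancy (Theorem~\ref{thm:main}) and why the authors state monotonicity as a conjecture generalising \cite[Conjecture 1.2]{BBP} rather than a theorem.

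The genuine gap is that for the interior range $0<l<m$ your proposal contains no argument at all. You correctly identify the obstruction --- the increment from $\nu^{l,m}\{j\}$ to $\nu^{l,m}\{j+1\}$ adds an arm of $l$ boxes in row $1$ \emph{and} a new row of $m-l$ boxes whose row index drifts with $j$, so no single application of Lemma~\ref{lem:inv} reduces both growth directions to Brion's fixed-weight multiplication-by-a-section injection --- but then the plan terminates with ``test the geometric approach on $\sa_{1,2}(\nu,\lambda)$''. That simplest instance is \cite[Conjecture 1.2]{BBP}, which remains open: the paper proves only its stability (\cite[Proposition 5.3]{LO} and Theorem~\ref{thm:main}), not its monotonicity. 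Your alternative combinatorial route (a forced filling of the arm-plus-leg gadget giving an injection between objects counted by consecutive coefficients) presupposes a positive combinatorial model for $a^\nu_{\lambda,(m)}$, which, as you yourself note, is Stanley's Problem~9; and a sign-free analogue of Theorem~\ref{thm:LO} is not known. So beyond the two boundary cases already in the paper, the entire content of the statement is unproved in your proposal, and correctly so --- had either of your two routes gone through, it would resolve an open conjecture rather than reprove a result of this paper.
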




Until now, we have considered plethysm coefficients of the form $a^\nu_{\lambda,\mu}$ where $l(\mu)=1$. By applying Lemma~\ref{lem:inv}, we can obtain another family of stable sequences from Theorem~\ref{thm:main} where $(m)$ is replaced by $(1^m)$, and $\sigma^{l,m}\{j\}$ and $\tau^{l,m}[j]$ are conjugated as necessary: for example, it follows immediately that $\big(a^{\nu^{1,2}\{j\}}_{\lambda^{1,2}[j],(1^2)}\big)_j$ also stabilises. Beyond results like this, what other sequences of plethysm coefficients stabilise when we replace $(m)$ by other partitions of $m$?

\smallskip

More generally, it appears that the form of Conjecture~\ref{conj:mark} can be related to certain plethystic semistandard tableaux of maximal weight, as introduced in \cite[Definition 1.4]{dBPW}: we record some observations made by Wildon\footnote{From a research report for the Mathematisches Forschungsinstitut Oberwolfach workshop, \textit{Character Theory and Categorification}, 2022.}. For $j\in\N$ and $\mu$ a partition, a plethystic semistandard tableau of shape $\mu^{(1^j)}$ can equivalently be thought of as a set of $j$ distinct $\mu$-tableaux, which will simply be called a $\mu$-tableau family when $j$ is fixed. A $\mu$-tableau family is maximal if its weight is maximal, with respect to the dominance ordering, over all $\mu$-tableau families. Then a special case of \cite[Theorem 1.5]{dBPW} states that the maximal partitions labelling constituents of $s_{(1^j)}\circ s_\mu$ are precisely the weights of the maximal $\mu$-tableau families.
Now, in the case of $\sa_{1,2}(\nu,\lambda)$ which was considered in \cite[Conjecture 1.2]{BBP} and \cite[Proposition 5.3]{LO}, the way in which the partition $\nu$ grows with $j$ is given by $v^{1,2}\{j\} = \big(\nu+(j)\big)\sqcup(1^j)$. As Wildon notes, $(j,1^j)$ is the weight of the maximal $(1^2)$-tableau family 
\[ {\scriptsize \young(1,2),\ \young(1,3),\ \cdots,\ \young(1,J)} \]
where $J=j+1$. This naturally generalises from $m=2$ to $m\in\N_{\ge 2}$, where we have $\nu^{m-1,m}\{j\}=\big(\nu+((m-1)j)\big)\sqcup(1^j)$, and $\big((m-1)j,1^j\big)$ is the weight of the maximal $(m-1,1)$-tableau family
\[ {\scriptsize \young(1\cdot\cdot\cdot1,2),\  \young(1\cdot\cdot\cdot1,3),\ \cdots,\ \young(1\cdot\cdot\cdot1,J).} \]
\begin{question}[Wildon]
	Can the stability result of \cite[Proposition 5.3]{LO} (or $\sa_{m-1,m}(\nu,\lambda)$ more generally) be generalised further to involve arbitrary maximal tableau families? In other words, given an arbitrary maximal tableau family, is there some stable sequence of plethysm coefficients such that $\nu$ (and possibly $\lambda$) grows with $j$ in a way that depends on the weight of the tableau family?
\end{question}

\bigskip

\end{document}